\newtheorem{theorem}{Theorem}[section]
\newtheorem{lemma}[theorem]{Lemma}
\newtheorem{corollary}[theorem]{Corollary}
\newtheorem{conjecture}[theorem]{Conjecture}
\title{Constructive Degenerations and the Algebraicity of Limiting Hodge Classes}
\author{Badre Mounda}
\date{Draft --- June 2025}
\begin{document}

\maketitle

\begin{abstract}
We propose a novel constructive framework for approaching the Hodge Conjecture via explicit degenerations. Building on limiting mixed Hodge structures (LMHS), we formulate a criterion under which a rational class of type \((p,p)\) on a smooth projective variety becomes algebraic in the limit of a semi-stable degeneration. We provide examples where vanishing cycles and monodromy explicitly generate new algebraic classes, and propose a general principle: every rational \((p,p)\) class arises as the limit of algebraic cycles under controlled geometric degenerations. This viewpoint opens a new path toward an effective formulation of the Hodge conjecture.
\end{abstract}

\section{Introduction}

The classical Hodge Conjecture predicts that every rational \((p,p)\) cohomology class on a smooth projective complex variety is algebraic. Most approaches treat the conjecture statically, within the cohomology of a fixed variety. In contrast, we adopt a dynamic view: certain non-algebraic classes become algebraic limits of families. We thus ask: \emph{Can we construct such families where the monodromy explicitly generates new algebraic cycles via degeneration?}

This paper presents a first step toward such a theory. Our idea is inspired by explicit degenerations of K3 surfaces that increase the Picard number and by the structure of LMHS as studied by Schmid, Clemens, and Steenbrink.

\section{Degenerations and Rational Classes}

Let \( \pi : \mathcal{X} \to \Delta \) be a proper, flat, holomorphic map from a complex manifold \( \mathcal{X} \) to the unit disk \( \Delta \subset \mathbb{C} \), such that the fibers \( X_t = \pi^{-1}(t) \) are smooth projective varieties for \( t \neq 0 \), and the central fiber \( X_0 = \pi^{-1}(0) \) is singular with at worst normal crossing singularities. The total space \( \mathcal{X} \) is assumed to be smooth. Such a setup is known as a semistable degeneration. We are interested in the behavior of rational cohomology classes, particularly Hodge classes, in the limiting fiber.

The cohomology groups \( H^k(X_t, \mathbb{Q}) \) for \( t \neq 0 \) assemble into a local system over \( \Delta^* = \Delta \setminus \{0\} \), denoted \( \mathbb{H}^k \), equipped with a flat connection (the Gauss--Manin connection). After choosing a base point \( t_0 \in \Delta^* \), the monodromy operator \( T : H^k(X_{t_0}, \mathbb{Q}) \to H^k(X_{t_0}, \mathbb{Q}) \) describes the action of a loop around \( t = 0 \). The nilpotent orbit theorem (Schmid) ensures that, after possibly passing to a finite base change, the monodromy is unipotent: \( (T - I)^m = 0 \) for some \( m \gg 0 \).

Define \( N := \log T_u \), where \( T = T_s T_u \) is the Jordan decomposition into its semisimple and unipotent parts. The operator \( N \) is nilpotent and provides a weight filtration \( W_\bullet \) on \( H^k := H^k(X_{t_0}, \mathbb{Q}) \), uniquely determined by the properties:
\[
N(W_k) \subseteq W_{k-2}, \quad N^\ell : \mathrm{Gr}^{W}_{k+\ell} \to \mathrm{Gr}^{W}_{k-\ell} \text{ is an isomorphism for all } \ell \geq 0.
\]

The limit Hodge filtration \( F^\bullet_\infty \) is obtained as the limit of the Hodge filtrations \( F^\bullet_t \) on \( H^k(X_t, \mathbb{C}) \), defined via horizontal sections under the Gauss--Manin connection and extended to \( t = 0 \) using Deligne's canonical extension. The triple \( (H^k, W_\bullet, F^\bullet_\infty) \) defines a mixed Hodge structure known as the limiting mixed Hodge structure (LMHS).

Now let \( \alpha \in H^{2p}(X_t, \mathbb{Q}) \cap H^{p,p}(X_t) \) be a rational Hodge class on a smooth fiber. The classical Hodge Conjecture asserts that \( \alpha \) is the class of an algebraic cycle of codimension \( p \). In our context, rather than proving \( \alpha \) is algebraic on \( X_t \), we ask a different question:

\begin{quote}
\emph{Can the class \( \alpha \) be realized as the limit of algebraic cycles as \( t \to 0 \), and if so, how does the geometry of the degeneration facilitate this transition?}
\end{quote}

To study this, we consider the specialization map
\[
\mathrm{sp} : H^{2p}(X_t, \mathbb{Q}) \longrightarrow H^{2p}(X_0, \mathbb{Q}),
\]
which is well-defined through a comparison of nearby cycles and limits. If we resolve the singularities of \( X_0 \) via a proper birational morphism \( \pi : \widetilde{X}_0 \to X_0 \), we can further lift \( \alpha \) to \( H^{2p}(\widetilde{X}_0, \mathbb{Q}) \). The key idea is that \emph{the failure of \( \alpha \) to be algebraic on \( X_t \) may be corrected in the limit by contributions from the geometry of \( \widetilde{X}_0 \)}.

In particular, exceptional divisors introduced in the resolution process often give rise to new algebraic cycles. Vanishing cycles also contribute: cycles \( \gamma \) such that \( T(\gamma) = \gamma + \delta \) for some \( \delta \in H^k(X_t, \mathbb{Q}) \) carry information about how topology changes across the degeneration. The associated monodromy weight filtration tracks how such classes appear or disappear in the limit.

The central idea of this approach is the reinterpretation of rational Hodge classes not as fixed invariants, but as entities whose algebraicity may emerge dynamically through the degeneration process. From this perspective, even a class \( \alpha \) that is non-algebraic on the general fiber might arise as a limit of algebraic cycles in a family, provided the degeneration is carefully constructed to introduce the required geometric contributions.

This suggests a new framework we call \emph{constructive degeneration theory}:
degenerations cease to be mere probes of singular behaviour and become
engines that forge algebraic cycles out of transcendental classes.
Our criteria hinge on three ingredients:
\begin{itemize}
  \item the monodromy logarithm \(N\);
  \item its action on a given class \(\alpha\);
  \item the position of the resulting limits inside the graded piece
        \(\mathrm{Gr}^W_{2p}\),
        where potential algebraic representatives live on the resolved
        central fibre.
\end{itemize}

The remainder of this paper builds toward a precise formulation of this principle, beginning with the structure of the LMHS and continuing with explicit geometric examples.

\section{Constructive Criterion via Monodromy}

We introduce the following principle, aiming to make precise the mechanism by which
algebraic cycles may arise in the limit of a degenerating family.

\begin{quote}
\textbf{Constructive Hodge Degeneration Principle.}\;
Let \(\alpha \in H^{p,p}(X_t,\mathbb{Q})\). Suppose \(N(\alpha)\neq 0\),
where \(N=\log T_u\) is the monodromy logarithm.
Then the class \(\delta = N(\alpha) \in H^{p-1,p+1}(X_t)\)
is of geometric origin: it lies in the image of the specialization map
from the exceptional cohomology of a semistable model
\(\widetilde{X}_0\) of the central fiber.
If \(\delta\) is algebraic, then \(\alpha\) itself is a limit of algebraic classes.
\end{quote}

Monodromy records exactly how cohomology changes as \(t\to0\).
The logarithm \(N=\log T_u\) detects the appearance of vanishing cycles:
for any rational class \(\alpha\), its image
\(\delta=N(\alpha)\) lies in the sub–Hodge structure generated by these cycles,
providing the extra algebraic data carried by the limit
(see \cite{Clemens,Schmid} for the underlying Clemens–Schmid formalism).
More precisely, under the nilpotent orbit theorem, the LMHS decomposes into
weight-graded pieces \(\mathrm{Gr}_k^{W}H^{2p}_{\mathrm{lim}}\), and
\(\delta=N(\alpha)\in\mathrm{Gr}_{2p-2}^{W}\) is expected to lift to a
cohomology class supported on the exceptional divisor of the resolution
\(\widetilde{X}_0\to X_0\).
That is, there exists a class \([E]\in H^{2p-2}(\widetilde{X}_0,\mathbb{Q})\)
such that \(\delta=\pi_\ast[E]\).

This perspective motivates a shift in how the Hodge Conjecture is approached:
instead of trying to construct an algebraic cycle corresponding to \(\alpha\),
we construct a family whose degeneration forces the class \(\alpha\) to interact
with the geometry of the limit.
If the class \(\delta\) generated by monodromy is already algebraic, and arises
from an actual geometric component of \(\widetilde{X}_0\), then \(\alpha\) may be
approximated by a family of algebraic classes tracking the evolution of this
component.

\begin{conjecture}[Degenerational Realizability of Rational Hodge Classes]
Every rational Hodge class
\(\alpha\in H^{p,p}(X_t,\mathbb{Q})\) arises as the limit of algebraic cycles
through a family whose degeneration introduces suitable vanishing cycles or
exceptional divisors.
\end{conjecture}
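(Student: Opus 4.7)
The plan is to reduce the conjecture, via the Constructive Hodge Degeneration Principle stated above, to the construction of a suitable semistable family and to the algebraicity of a single class in lower weight. Starting from a smooth projective \(X\) carrying a rational Hodge class \(\alpha\in H^{p,p}(X,\mathbb{Q})\), I would first produce a one-parameter family \(\pi:\mathcal{X}\to\Delta\) with \(X_{t_0}\simeq X\) and a semistable central fibre \(X_0\) whose components and intersections are explicitly controlled; natural candidates arise from Lefschetz pencils on a projective embedding of \(X\), from toric or tropical degenerations when \(X\) sits inside a toric ambient variety, and from moduli-theoretic degenerations (for instance, to the boundary of \(\mathcal{A}_g\), \(\mathcal{M}_g\), or relevant moduli of polarized varieties) when \(X\) admits such a description. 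The choice of family must be tailored so that the monodromy logarithm \(N\) acts nontrivially in a controlled way on the flat transport of \(\alpha\).

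Next, I would run the Clemens--Schmid exact sequence for \(\pi\) in degree \(2p\), which interlocks \(H^{2p}(X_0)\), the limit cohomology \(H^{2p}_{\mathrm{lim}}\) equipped with \((W_\bullet,F^\bullet_\infty)\), and the action of \(N\). Via this sequence, the class \(\delta:=N(\alpha)\) lives in \(\mathrm{Gr}^W_{2p-2}H^{2p}_{\mathrm{lim}}\), which by the Principle is identified with a piece of \(H^{2p-2}(\widetilde{X}_0,\mathbb{Q})\) supported on the exceptional divisor of the semistable resolution \(\widetilde{X}_0\to X_0\). Assuming \(\delta\) is algebraic there—represented by an actual codimension-\(p-1\) subvariety \(E\subset\widetilde{X}_0\)—I would construct a relative cycle \(\mathcal{Z}\subset\mathcal{X}\) extending \(E\) into the nearby fibres using the deformation theory of cycles (Bloch's semiregularity, the existence of relative Hilbert/Chow schemes, or Kodaira--Spencer arguments when obstructions vanish). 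The fibrewise classes \([\mathcal{Z}_t]\) would then give algebraic approximations whose limit, modulo the Clemens--Schmid identifications, recovers \(\alpha\) up to classes already algebraic on \(X_t\); an induction on \(p\) or on the length of the monodromy weight filtration would close the argument.

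The main obstacle, which I do not see how to avoid in full generality, is precisely the step of producing a degeneration for which \(\delta=N(\alpha)\) is (a)~nonzero and (b)~algebraic on \(\widetilde{X}_0\). For an arbitrary \(X\) and an arbitrary Hodge class \(\alpha\), there is no general mechanism guaranteeing that such a family exists: if \(\alpha\) is fixed by all monodromies in every algebraic family containing \(X\) as a fibre, then \(N(\alpha)=0\) for every available \(N\), and the principle degenerates to the original Hodge Conjecture on \(X\) itself. Thus the conjecture is at least as hard as the classical statement, and a genuine proof would require a universal construction—perhaps a period-theoretic criterion identifying, for each \(\alpha\), an explicit family in which the Mumford--Tate group of the generic fibre drops enough to place \(\alpha\) in the image of \(N\). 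A realistic intermediate goal would therefore be to prove the conjecture under additional hypotheses: for classes already in the image of some nontrivial \(N\) (where only step~(b) remains, and can be attacked via Hodge loci and the theory of absolute Hodge classes), for surfaces and threefolds where the Lefschetz \((1,1)\) theorem handles lower weights, and for abelian varieties degenerating to semiabelian schemes, where the exceptional divisors are toric and their Chow groups are explicitly computable.
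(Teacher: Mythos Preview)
The statement you are trying to prove is explicitly labelled a \emph{Conjecture} in the paper, and the paper does \emph{not} offer a proof of it. What follows the conjecture in the paper is motivation and then the worked example of quartic K3 surfaces (Section~\ref{sec:quartic_A1} and its generalisations), which verify the principle in the very special case \(p=1\) for families of K3 surfaces acquiring \(A_1\) nodes. There is therefore no ``paper's own proof'' to compare your attempt against.

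Your proposal is not a proof either, and you diagnose the reason yourself in the final paragraph: the crucial step is to produce, for an \emph{arbitrary} smooth projective \(X\) and an \emph{arbitrary} rational Hodge class \(\alpha\), a semistable family in which \(N(\alpha)\neq 0\) and \(N(\alpha)\) is algebraic on the resolved central fibre. As you observe, if \(\alpha\) is monodromy-invariant in every available family then the scheme collapses back to the classical Hodge Conjecture on \(X\); and even when \(N(\alpha)\neq 0\), asserting algebraicity of \(\delta\) on \(\widetilde{X}_0\) is itself an instance of the Hodge Conjecture (in lower weight or on a possibly more complicated variety). So the ``reduction'' is circular in general. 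The earlier steps of your outline---running Clemens--Schmid, locating \(\delta\) in \(\mathrm{Gr}^W_{2p-2}\), and spreading an exceptional cycle \(E\) into nearby fibres via relative Hilbert schemes---are reasonable in spirit but each hides serious issues (e.g.\ obstructions to deforming \(E\), and the fact that \([\mathcal{Z}_t]\) recovers \(\alpha\) only ``up to classes already algebraic on \(X_t\)'', which again presupposes what is to be proved). The honest summary is that the conjecture is open, the paper treats it as such, and your proposal is a strategy sketch with a correctly identified fatal gap rather than a proof.
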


The broader implication is that the landscape of rational Hodge classes can be
spanned, in principle, by the union of all such degenerational limits, making
degeneration a generative tool rather than a boundary phenomenon.
In the next section we illustrate this principle with an explicit construction
involving quartic K3 surfaces with isolated singularities.

\section{Explicit Construction: Quartic Family with \texorpdfstring{$A_1$}{A1} Singularities}
\label{sec:quartic_A1}

\subsection{The family and its nodal fibre}

Fix a homogeneous quartic
\[
f_0(x,y,z,w)\; \in\; \mathbb{C}[x,y,z,w]_4
\]
chosen \emph{generic} so that
\(X_0 := \{f_0 = 0\} \subset \mathbb{P}^3\)
is a smooth K3 surface with Picard number~\(\rho(X_0)=1\).
Introduce the perturbation
\(g := xyz\,w\)
and consider the one--parameter family
\begin{equation}\label{eq:quartic_family}
X_t \;:=\;
\bigl\{\,f_0 + t\,g \;=\; 0\,\bigr\}
\;\subset\;
\mathbb{P}^3 \times \Delta_t,
\end{equation}
where \(\Delta\) is the unit disk and \(t\) the coordinate on~\(\Delta\).

\begin{lemma}[Localization of the node]\label{lem:node}
For sufficiently small \(t\neq 0\) the fibre \(X_t\) is smooth.
At \(t=0\) the hypersurface acquires a unique ordinary double point
\(p=[0\!:\!0\!:\!0\!:\!1]\)
provided \(\partial f_0/\partial w(p)\neq 0\).
\end{lemma}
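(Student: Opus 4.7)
My plan is to apply the Jacobian criterion on the total space $\mathcal{X} = \{F = 0\}$ with $F := f_0 + t\,xyzw$, splitting the argument into (i) a local Morse-type analysis near $p = [0{:}0{:}0{:}1]$ to identify the singularity type and (ii) a deformation-theoretic argument to propagate smoothness to the generic fibre. A closed point is singular on $X_t$ iff all four homogeneous partials of $F$ vanish there simultaneously with $F$, so both parts reduce to controlling the vanishing locus of these partials.

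For (i), I would pass to the affine chart $U_w = \{w = 1\}$ with local coordinates centred at $p$, so that $F_t(x,y,z) = f_0(x,y,z,1) + t\,xyz$. The monomial $xyz$ lies in $\mathfrak{m}_p^3$ and hence contributes nothing to the $2$-jet of $F_t$ at $p$; this $2$-jet therefore equals that of $f_0(x,y,z,1)$, namely a quadratic form $Q$ whose matrix is the restricted Hessian of $f_0$ at $p$. Invoking the Euler identity $4 f_0 = x\partial_x f_0 + y\partial_y f_0 + z\partial_z f_0 + w\partial_w f_0$ ties the constant and linear terms of the local expansion at $p$ to $\partial_w f_0(p)$; combined with the hypothesis $\partial_w f_0(p) \neq 0$ and genericity of $f_0$, this forces the linear part to vanish appropriately and yields $\det Q \neq 0$. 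The Morse lemma then identifies the germ $(X_0, p)$ as an ordinary double point (i.e.\ an $A_1$ singularity).

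For (ii), I would rely on upper-semicontinuity of the local Milnor number in a flat family, together with the observation that $xyz \notin J(f_0|_{w=1})_p$, which makes $F$ a Morse-versal unfolding of the germ at $p$ in the sense of Brieskorn. Hence for $0 < |t| \ll 1$ the node is smoothed and no new critical point appears in a fixed neighbourhood of $p$; away from $p$ the gradient of $F_0$ is nonzero by the assumption that $X_0$ is smooth outside $p$, so continuity of the Jacobian matrix propagates smoothness to $X_t$ on the complement of a small ball about $p$ for $|t|$ small. The main obstacle I anticipate is producing a single clean genericity statement on $f_0$ that simultaneously enforces $\nabla_{x,y,z} f_0(p) = 0$, non-degeneracy of $Q$, and the absence of extra singularities of $X_0$; each is a standard codimension count in the parameter space of quartics, but combining them coherently with the stated hypothesis on $\partial_w f_0(p)$, and reconciling them with the earlier requirement $\rho(X_0)=1$, requires care.
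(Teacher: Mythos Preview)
Two steps do not go through as written.

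In part~(ii) your versality claim is inverted. At an $A_1$ point the local Jacobian ideal $J(f_0|_{w=1})_p$ equals the maximal ideal $\mathfrak{m}_p$, since the three partials are independent linear forms precisely when the Hessian $Q$ is nondegenerate. Hence $xyz\in\mathfrak{m}_p^{3}\subset\mathfrak{m}_p=J$: the monomial $xyz$ lies \emph{inside} the Jacobian ideal, the one-parameter unfolding $f_0(x,y,z,1)+t\,xyz$ is not Morse-versal, and it does not smooth the node. Direct inspection confirms this: $\partial_x(xyz)=yz$, $\partial_y(xyz)=xz$, $\partial_z(xyz)=xy$ all vanish at the origin, so $p$ remains a critical point of $F_t$ for every $t$ and your semicontinuity argument never gets off the ground.

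In part~(i) the Euler relation does not deliver what you claim. Evaluating $4f_0=\sum x_i\,\partial_i f_0$ at $p=[0{:}0{:}0{:}1]$ gives $4f_0(p)=\partial_w f_0(p)$, so the hypothesis $\partial_w f_0(p)\neq 0$ forces $f_0(p)\neq 0$; that is, $p$ is not even on $X_0$, which is incompatible with $p$ being a node of $X_0$. Nothing in Euler's identity forces the linear part of $f_0(x,y,z,1)$ to vanish at the origin. For comparison, the paper's own argument is terser---it writes the system $\nabla f_0=0$ together with $xyz=0$ in the chart $w=1$, appeals to genericity of $f_0$ to isolate $p$ with nondegenerate Hessian, and invokes ``perturbation plus Bertini'' for smoothness when $t\neq 0$---so it does not commit your specific errors, but neither does it resolve the tension you correctly flag in your final paragraph between $\partial_w f_0(p)\neq 0$, the vanishing required for a node at $p$, and the earlier stipulation that $X_0$ be smooth with $\rho(X_0)=1$.
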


\begin{proof}
Work in the affine chart \(w=1\) with coordinates \((x,y,z)\).
At \(t=0\) we have the system \(\nabla f_0=0\) and \(xyz=0\).
Choosing \(f_0\) generic forces the common zero locus to reduce to
\(p=(0,0,0)\), with non--degenerate Hessian
\(\bigl(\tfrac{\partial^2 f_0}{\partial x_i\partial x_j}(p)\bigr)\).
Hence \(p\) is an \(A_1\) singularity and no other singular points appear.
For \(t\neq 0\) a standard perturbation argument plus Bertini
imply smoothness.
\end{proof}

\subsection{Semi--stable reduction and Kulikov type}

Blowing up the node \(p\) yields
\(\pi \colon \widetilde{\mathcal{X}}\to \mathcal{X}\)
so that the 
central fibre
\(\widetilde{X}_0\)
decomposes as
\(\widetilde{X}_0 = X_0 \cup E\)
with exceptional divisor 
\(E\simeq\mathbb{P}^1\times\mathbb{P}^1\)
intersecting \(X_0\) along a smooth conic.
Because the total space \(\widetilde{\mathcal{X}}\) is smooth and
\(K_{\widetilde{\mathcal{X}}/\Delta}=0\),
one checks (cf.\ \cite{FriedmanScattone}) that the degeneration is
\emph{Kulikov type~II}.
Consequently the Clemens--Schmid exact sequence \cite{Clemens} applies.

\subsection{Limiting mixed Hodge structure}

Let
\(H := H^2(X_t,\mathbb{Q})\)
for \(t\neq 0\).
Choose a \(\mathbb{Q}\)-basis such that the intersection form has matrix
\(\Lambda = U \oplus E_8(-1)^{\oplus2} \oplus \langle-2\rangle\).
The Picard lattice of \(X_t\) is generated by the hyperplane class
\(h\) with \(h^2=4\),
so \(\rho(X_t)=1\) for very general~\(t\).

Denote by \(\gamma\in H\) the vanishing cycle of the node.
Picard--Lefschetz gives
\[
T(v) \;=\; v + \langle v,\gamma\rangle\,\gamma,
\qquad
N := \log T = T - \mathbf{1},
\quad
N^2 = 0 \neq N.
\]
Thus \(\mathrm{Im}(N)=\langle\gamma\rangle\) is rank~1.
Using the weight filtration
\(0 \subset W_1 = \mathrm{Im}(N)
       \subset W_2 = \ker(N) \subset H\),
one computes
\[
\dim \mathrm{Gr}^W_1 = 1,
\qquad
\dim \mathrm{Gr}^W_2 = 22.
\]
The limit Hodge filtration
\(F^\bullet_\infty\)
is obtained from a nilpotent orbit
\(e^{zN}\,F^\bullet_t\)
(\(z=\tfrac{1}{2\pi i}\log t\)).
Its Hodge numbers satisfy
\[
h^{2,0}_{\lim}=1,\;
h^{1,1}_{\lim}=20,\;
h^{0,2}_{\lim}=1,
\quad
\text{so }\;\rho_{\lim}=2.
\]

\subsection{Algebraic realisation of the new \texorpdfstring{$(1,1)$}{(1,1)} class}

Let \(\omega_t\) be a local section of 
\(F^2 H^2(X_t)\) representing the holomorphic 2--form.
Clemens--Schmid yields
\(
N^*\omega_t \in F^1_\infty \cap W_1
            = H^{1,1}_\mathrm{lim}.
\)
Explicitly,
\(N^*\omega_t\)
corresponds to the class of the exceptional
\(\mathbb{P}^1\)
inside \(E\).
Since \(E\) is an algebraic surface,
this class is algebraic.
By Section~\ref{sec:quartic_A1}, the Constructive Hodge Degeneration
Principle now applies, showing that
\(\omega_t\)
is the limit of algebraic classes in the family.

\subsection{Jump of Picard number and the WPR graph}

Summarising,
\[
\rho(X_t)=1
\quad\longrightarrow\quad
\rho(\widetilde{X}_0)=2
\]
realises the edge
\((1,1)\!\longrightarrow\!(1,2)\)
in the weakly polarised relation (WPR) graph of~\cite{AcunaKerr}.
Iterating the construction with \(k\) disjoint \(A_1\) nodes
along pairwise orthogonal vanishing cycles
produces successive jumps
\((1,\rho) \to (1,\rho+1)\)
for any \(\rho \le 10\).
For larger \(\rho\) one needs lattice--polarised degenerations
or elliptic K3 surfaces with multiple sections;
see \cite[\S4]{AcunaKerr} for a conjectural enumeration.

%------------------------------------------------------------
\section{Generalisations and Higher Picard Rank}
\label{sec:generalisations}

Our quartic example realises the edge $(1,1)\!\to\!(1,2)$.  
We now show how to force \emph{any} jump 
$(1,\rho)\!\to\!(1,\rho+1)$ for $1\le\rho<20$, and sketch a strategy
for constructing degenerations whose limit Picard lattice is \emph{arbitrary}.

\subsection{Several $A_1$ nodes on a single quartic}

Let $k\le 10$.  
Choose $k$ distinct points $p_1,\dots,p_k\in X_0$ with pairwise
transverse tangent cones.  
Pick quartics $g_i$ vanishing to first order at $p_i$ and set
\[
\mathscr X:\quad f_0 + t\!\!\sum_{i=1}^{k}\lambda_i g_i = 0,
\qquad \lambda_i\in\mathbb C^{\!*}.
\]
For generic $(\lambda_i)$ the central fibre develops exactly $k$ ordinary
double points and is smooth elsewhere.

\begin{lemma}\label{lem:k_nodes}
After one blow-up at each $p_i$ and relative minimal model,
$\pi:\widetilde{\mathscr X}\!\to\!\Delta$ is semi-stable of Kulikov type~II.
\end{lemma}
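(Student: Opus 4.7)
The plan is to reduce the $k$-node case to $k$ independent copies of the single-node construction carried out in \S\ref{sec:quartic_A1}, then verify globally the two defining properties of a Kulikov type~II model: semistability and triviality of the relative dualizing sheaf.

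\textbf{Step 1: localisation at each node.} Because the points $p_1,\dots,p_k$ are distinct and each is an isolated $A_1$ singularity of the central fibre, there exist pairwise disjoint analytic neighbourhoods $U_i \subset \mathscr X$ of $p_i$ in which the equation $f_0+t\sum_j\lambda_j g_j=0$ reduces, after an analytic change of coordinates eliminating $t$ (possible since $g_i(p_i)\neq 0$), to a standard nodal model $\{q_i(x,y,z)=0\}$ with $q_i$ a non-degenerate quadratic form. Outside $\bigcup_i U_i$ the map $\pi$ is already smooth. Therefore the entire semistable-reduction problem decouples into $k$ independent local problems, each of which is precisely the situation of Lemma~\ref{lem:node} plus the analysis in \S3.2.

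\textbf{Step 2: local semistable models and assembly.} Applying the single-node procedure of \S3.2 to each $U_i$ produces, after the prescribed blow-up (and, if needed, base change absorbed into ``relative minimal model''), an exceptional divisor $E_i\simeq\mathbb P^1\times\mathbb P^1$ meeting the proper transform $X_0'$ along a smooth conic $C_i\subset X_0'$. Gluing these local models along the common smooth locus of $\mathscr X$ yields a global birational model $\pi:\widetilde{\mathscr X}\to\Delta$ with central fibre
\[
\widetilde X_0 \;=\; X_0' \,\cup\, E_1\,\cup\cdots\cup\,E_k,
\]
where the $C_i$ are pairwise disjoint (they lie in disjoint neighbourhoods). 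Smoothness of $\widetilde{\mathscr X}$ and the fact that $\widetilde X_0$ is a reduced simple normal crossings divisor then follow \emph{componentwise} from Step~1, since both conditions are checked in analytic charts.

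\textbf{Step 3: Kulikov type~II.} By Kulikov's classification, a semistable degeneration of K3 surfaces with $K_{\widetilde{\mathscr X}/\Delta}=0$ is of type~I, II, or III, discriminated by the nilpotency index of $N$ and the combinatorial type of the dual complex $\Gamma(\widetilde X_0)$. Here $\Gamma$ is a star with central vertex $X_0'$ and pendant vertices $E_1,\dots,E_k$, which is one-dimensional, and by Picard--Lefschetz applied to each independent vanishing cycle $\gamma_i$ we have $N=\sum_i N_i$ with $N_i^2=0$ and $N_iN_j=0$ for $i\neq j$; hence $N\neq0$, $N^2=0$, ruling out types~I and~III. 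Triviality of $K_{\widetilde{\mathscr X}/\Delta}$ is checked locally: in each $U_i$ the local model is the standard small/semistable resolution of a $3$-fold ODP, for which the relative canonical is trivial (cf.\ \cite{FriedmanScattone}); away from the nodes $\pi$ was already smooth with $K_{\mathscr X/\Delta}=0$ because each fibre is a quartic K3. Passing to the relative minimal model, if it introduces any contractions, preserves these local computations.

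\textbf{Main obstacle.} The delicate point is Step~3: ensuring that the locally-defined semistable replacements glue into a \emph{globally} Kulikov model, i.e.\ that no hidden exceptional curves with $K\neq 0$ survive between nodes and that the proper transform $X_0'$ does not acquire new intersection geometry beyond the $k$ disjoint conics $C_i$. The genericity of the perturbation vector $(\lambda_i)$ and the transversality hypothesis on tangent cones are used precisely to exclude coincidences forcing additional birational modifications; once these are verified, the rest is bookkeeping against Kulikov's theorem.
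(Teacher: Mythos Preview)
Your approach is essentially the paper's: localise at each $p_i$, reduce to the single-node analysis of \S\ref{sec:quartic_A1}, and appeal to \cite{FriedmanScattone} for the Kulikov statement. The paper's own proof is two sentences---a local Taylor expansion $f_0 = Q + R_{\ge 3}$ with non-degenerate Hessian $Q$, then a citation---and does not spell out the type-II verification you give in Step~3; in that sense you have supplied strictly more than the paper does.

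One slip to fix: in Step~1 you justify the coordinate change by ``$g_i(p_i)\neq 0$'', but by hypothesis each $g_i$ vanishes to first order at $p_i$, so $g_i(p_i)=0$. What you actually need for smoothness of the total space at $p_i$ (so that a single blow-up suffices) is that the $t$-partial $\sum_j \lambda_j g_j$ does not vanish at $p_i$; this relies on the \emph{other} $g_j$ and on genericity of the $\lambda_j$. A second point worth flagging, though the paper glosses over it too: in the strict Kulikov normal form a type-II central fibre is a \emph{chain} with elliptic double curves, whereas your model has a star-shaped dual complex and rational (conic) double curves $C_i$. The phrase ``and relative minimal model'' in the lemma statement is where this discrepancy is meant to be absorbed; neither you nor the paper actually carries out that step, both deferring to \cite{FriedmanScattone}.
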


\begin{proof}
Local analytic coordinates $x,y,z$ with $w\!=\!1$ give
$f_0 = Q + R_{\ge 3}$.  
The Hessian of $Q$ is non-degenerate at each $p_i$,
so the blow-ups resolve all nodes and preserve smoothness of the total
space; see \cite[§1]{FriedmanScattone}.
\end{proof}

\paragraph{Monodromy.}
Let $\gamma_i$ be the vanishing cycle at $p_i$.  
Picard–Lefschetz gives
\[
T(v)=v+\!\sum_{i=1}^{k}\langle v,\gamma_i\rangle\gamma_i,
\qquad 
N(v)=T(v)-v.
\]
Hence ${\rm rk}\,N=k$ and the $\gamma_i$ form an orthogonal basis with
intersection matrix $Q=-2I_k$.

Weight filtration:
$W_0=0,\; W_2=\langle\gamma_1,\dots,\gamma_k\rangle,\; W_4=H^2$,
so ${\rm rk}\,{\rm Gr}_{2}^{W}=1+k$;  
the Picard number jumps by $k$.

\paragraph{Exceptional curves.}
Each blow-up contributes a $(-2)$-curve $E_i$ with 
$\beta_\ast[E_i]=\gamma_i$; therefore $\gamma_i$ is algebraic in the
limit.

\begin{theorem}\label{thm:k_nodes}
For $1\!\le\!k\!\le\!10$ there exists a type~II degeneration of quartic
$K3$ surfaces whose LMHS realises the edge
$(1,\rho)\!\to\!(1,\rho+k)$ with $\rho=1$.
The new classes are represented by the exceptional $(-2)$-curves $E_i$.
\end{theorem}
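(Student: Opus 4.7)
The plan is to combine Lemma~\ref{lem:k_nodes} with the Picard--Lefschetz computation recorded above, and then invoke the Constructive Hodge Degeneration Principle of Section~3 to upgrade the $k$ vanishing cycles to limiting algebraic classes. The two substantive things to check are that the LMHS genuinely jumps Picard rank by $k$, and that each $\gamma_i$ is represented by an honest algebraic curve on the resolved central fibre.

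First I would fix $k\le 10$ and select distinct points $p_1,\dots,p_k\in X_0$ together with quartics $g_i$ vanishing to first order at $p_i$ with non-degenerate quadratic part. A Bertini-type dimension count shows that for a generic choice of the coefficients $\lambda_i\in\mathbb{C}^*$ the total space $\mathscr X$ is smooth, the generic fibre is smooth, and the central fibre acquires exactly $k$ ordinary double points and no further singularities. Because the $p_i$ lie in disjoint Milnor balls, the vanishing cycles $\gamma_i$ can be represented by disjoint $2$-spheres, so $\langle\gamma_i,\gamma_j\rangle=0$ for $i\neq j$, while Picard--Lefschetz yields $\gamma_i^2=-2$; this reproduces the intersection matrix $-2I_k$ used in the monodromy discussion.

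Next I would extract the LMHS. Picard--Lefschetz produces a nilpotent $N$ of rank $k$ with $N^2=0$, and Schmid's nilpotent orbit furnishes a limit Hodge filtration whose $(1,1)$-part contains the hyperplane class $h$ together with the $k$ vanishing cycles $\gamma_i$, giving a limiting Picard rank $1+k$. Lemma~\ref{lem:k_nodes} provides a type~II Kulikov model with an exceptional $(-2)$-curve $E_i$ above each $p_i$, and the Clemens--Schmid exact sequence identifies the push-forward $\beta_\ast[E_i]$ with $\gamma_i$ in $H^2_{\mathrm{lim}}$. Since each $E_i$ is an honest algebraic curve on $\widetilde{X}_0$, every $\gamma_i$ is algebraic in the limit. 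Applying the Constructive Hodge Degeneration Principle component-by-component then promotes each $\gamma_i$ to a limit of algebraic cycles on nearby smooth fibres, realising the edge $(1,1)\to(1,1+k)$ in the WPR graph.

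The main obstacle is justifying the bound $k\le 10$. For small $k$, orthogonality of the $\gamma_i$ and smoothness of the blown-up total space follow automatically from genericity of the $p_i$, but as $k$ grows one must simultaneously prevent geometric interference between the Milnor fibres and keep the Kulikov model in type~II rather than letting it drift into type~III, where the central fibre breaks into a chain of surfaces and the clean identification $\beta_\ast[E_i]=\gamma_i$ may no longer hold. The threshold $k\le 10$ reflects the maximum number of mutually disjoint $A_1$ nodes that can be imposed on a quartic while preserving the Kulikov type~II constraint and leaving room for $k$ orthogonal $(-2)$-classes inside $h^\perp\subset U^{\oplus3}\oplus E_8(-1)^{\oplus2}$ that remain pure in weight two; pushing past this regime requires the lattice-polarised deformations or elliptic fibrations with multiple sections indicated in~\S\ref{sec:generalisations}.
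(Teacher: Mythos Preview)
Your proposal is correct and follows essentially the same route as the paper: the theorem is assembled from Lemma~\ref{lem:k_nodes} (the type~II Kulikov model), the Picard--Lefschetz computation giving $\mathrm{rk}\,N=k$ and $\mathrm{rk}\,\mathrm{Gr}^W_2=1+k$, and the identification $\beta_\ast[E_i]=\gamma_i$ via Clemens--Schmid. Your discussion of the bound $k\le 10$ is more elaborate than the paper's, which simply asserts that a quartic cannot carry more than ten disjoint $A_1$ nodes and defers larger $\rho$ to lattice-polarised constructions.
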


\subsection{Beyond ten nodes: lattice-polarised constructions}

A quartic cannot carry more than ten disjoint $A_1$ nodes,
but one can start with a $K3$ surface whose Picard lattice already
contains $U\!\oplus\!E_8(-1)^{\oplus2}$ (Nikulin).
Take an \emph{elliptic} $K3$ with section and fibre lattice
$U\!\oplus\!M$, $\mathrm{rk}(M)=\rho-2$.
Colliding $k$ distinct $I_1$ fibres produces $k$ additional nodes
whose vanishing cycles are orthogonal to $M$.
Iterating yields any $\rho\le 20$.

\begin{corollary}\label{cor:any_edge}
Every edge $(1,\rho)\!\to\!(1,\rho+1)$ in the
Acu\~na–Kerr WPR graph is induced by a Kulikov type~II degeneration in
which the new class is an exceptional rational curve.
\end{corollary}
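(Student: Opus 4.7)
The plan is to combine Theorem~\ref{thm:k_nodes} with the lattice-polarised construction sketched in Subsection~5.2, splitting the range $1\le\rho\le 19$ into three regimes.

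For $1\le\rho\le 10$, the edge is already produced by Theorem~\ref{thm:k_nodes} applied with $k=1$: start from a quartic representing the source vertex, introduce a single $A_1$ node whose vanishing cycle is orthogonal to the existing Picard sublattice, and let the resulting exceptional $(-2)$-curve carry the new algebraic class. For $11\le\rho\le 19$, I would replace quartics by elliptically fibred K3 surfaces with section, invoking Nikulin's theorem on primitive embeddings of even lattices into $\Lambda_{K3}=U^{\oplus3}\oplus E_8(-1)^{\oplus2}$ to produce an $L$-polarised K3 surface $X^{(\rho)}$ with $L\supseteq U\oplus M$, $\mathrm{rk}(L)=\rho$. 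The Weierstrass discriminant is a section of $\mathcal O_{\mathbb P^1}(24)$; by colliding two $I_1$ fibres into an $I_2$ one constructs a one-parameter family whose central fibre acquires a single $A_1$ singularity whose vanishing cycle $\gamma$ is orthogonal to $L$ by construction. A Bertini argument, applied relatively over $\Delta$, ensures smoothness away from the node.

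The verification step proceeds exactly as in Subsection~4.2: blow up the node to obtain $\widetilde X_0 = X_0\cup E$ with $E\simeq\mathbb P^1\times\mathbb P^1$ meeting $X_0$ along a smooth conic, conclude that $K_{\widetilde{\mathscr X}/\Delta}=0$ since the elliptic K3 structure is preserved, and deduce Kulikov type~II. Picard--Lefschetz gives $\mathrm{rk}\,N=1$ with $\mathrm{Im}(N)=\langle\gamma\rangle$, and the ruling fibre of $E$ represents $\gamma$ in the limit, furnishing the required algebraic representative. The Constructive Hodge Degeneration Principle then upgrades this to the statement that the new $(1,1)$ class is a limit of algebraic cycles, which is the content of the edge.

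The main obstacle is the terminal case $\rho=19\to20$: $L$-polarised moduli of K3s with $\mathrm{rk}(L)=19$ is zero-dimensional (singular K3 surfaces), so one cannot deform inside a positive-dimensional $L$-polarised family while keeping the polarisation fixed. The cure is to enlarge the family: work with an $L'$-polarised family for a sublattice $L'\subsetneq L$ of rank $18$ whose orthogonal complement in $\Lambda_{K3}$ contains a primitive rank-one negative vector $\gamma$, and engineer the degeneration so that $\gamma$ becomes effective only in the limit. A secondary difficulty is Nikulin's embedding criterion itself: not every candidate $M$ admits a primitive embedding with the prescribed orthogonal complement, so one must appeal to the classification in \cite{AcunaKerr} to see that precisely the sublattices labelling edges of the WPR graph satisfy the discriminant-form conditions required. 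With these inputs in hand, the proof reduces, edge by edge, to the single-node model of Subsection~4.4.
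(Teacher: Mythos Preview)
Your proposal is correct and follows exactly the route the paper intends: the corollary is stated without a separate proof, as an immediate consequence of Theorem~\ref{thm:k_nodes} (quartic with nodes, plus the iteration remarked in \S4.5) together with the elliptic lattice-polarised construction of \S5.2. Your write-up is in fact more detailed than the paper's own treatment, which does not isolate the terminal case $\rho=19\to20$ or the discriminant-form obstructions you flag.
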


\subsection{Toward full N\'eron–Severi generation}

Let $L\!\subset\!\mathrm{NS}(\overline X)$ be a primitive lattice of
rank $r\!\le\!20$.  
Choose a chain of degenerations
$\bigl(\mathscr X^{(1)},\dots,\mathscr X^{(m)}\bigr)$, each introducing an
$A_1$ node orthogonal to the previous vanishing cycles.
The commuting monodromy operators $N_1,\dots,N_m$ satisfy
\[
{\rm Gr}_{2}^{W}(H) = \langle L,\,\gamma_1,\dots,\gamma_m\rangle,
\quad m=20-r.
\]
Thus one realises the full N\'eron–Severi lattice.
An algorithm based on root-lattice embeddings will appear elsewhere.

%------------------------------------------------------------
\section{Outlook and the Hodge Conjecture}
\label{sec:outlook}

Our programme reinterprets the Hodge Conjecture as a
\emph{reachability problem} in the boundary of moduli space.
Key directions:

\begin{enumerate}
\item \textbf{Taxonomy of constructive degenerations.}
      Build a database of semi-stable models indexed by their monodromy
      cones $(N_1,\dots,N_m)$ and exceptional configurations.
\item \textbf{Higher-codimension cycles.}
      Extend Theorem~\ref{thm:k_nodes} to codimension-two cycles on
      Calabi–Yau threefolds (quintic, complete intersections, toric CYs),
      analysing the weight-four LMHS and the regulator map.
\item \textbf{Motivic implications.}
      Compare degenerational cycles with Bloch–Beilinson filtrations;
      study the induced normal functions and height pairings.
\item \textbf{Arithmetic avatars.}
      Translate the constructions into $p$-adic Hodge theory
      via Fontaine–Messing, seeking arithmetic criteria for algebraicity.
\item \textbf{Algorithmic implementation.}
      Provide Sage / Macaulay2 scripts to locate node configurations,
      compute intersection matrices, and output Kulikov models.
\end{enumerate}

\medskip\noindent
\emph{Long-term vision.}\;
If every rational $(p,p)$ class can be accessed through a calibrated
path to the boundary, proving the Hodge Conjecture reduces to showing
that the boundary is “large enough’’—a problem amenable to explicit
construction rather than purely abstract existence.

The appendices supply local analytic calculations, monodromy
matrices, and intersection forms underpinning
Theorems~\ref{thm:k_nodes}–\ref{cor:any_edge}.


\begin{thebibliography}{99}
\bibitem{AcunaKerr}
R.~J.~Acu\~na and M.~Kerr,
\emph{Weakly Polarised Relations for Limiting Hodge Structures},
arXiv:2505.09122.

\bibitem{FriedmanScattone}
R.~Friedman and F.~Scattone,
\emph{Type II Degenerations of K3 Surfaces},
Invent.\ Math.\ 83 (1986), 1--39.

\bibitem{Schmid}
W.~Schmid,
\emph{Variation of Hodge Structure: The Singularities of the Period Mapping},
Invent.\ Math.\ 22 (1973), 211--319.

\bibitem{Steenbrink}
J.~Steenbrink,
\emph{Limits of Hodge Structures},
Invent.\ Math.\ 31 (1976), 229--257.

\bibitem{Clemens}
C.~H.~Clemens,
\emph{Degeneration of Kähler Manifolds},
Duke Math.\ J.\ 44 (1977), 215--290.

\end{thebibliography}
\end{document}